\let\SavedRightarrow=\Rightarrow
\let\Rightarrow=\SavedRightarrow
\newcommand{\Bee }{\mathcal B}
\newcommand{\Pee }{\mathcal P}
\newcommand{\Tee }{\mathcal T}
\newcommand{\Xee }{\mathcal X}
\newcommand{\See }{\mathcal S}
\newcommand{\w}{\operatorname{w}}
\newcommand{\cl}{\operatorname{cl}}
\renewcommand{\int}{\operatorname{Int}}
\newtheorem{theorem}{Theorem}
\newtheorem{lemma}[theorem]{Lemma}
\author{Andrzej Kucharski}
\address{Andrzej Kucharski \\
 Institute of Mathematics, University of
Silesia \\
 ul. Bankowa 14, 40-007 Katowice}
\email{akuchar@ux2.math.us.edu.pl}
\author{Szymon Plewik}
\address{Szymon Plewik\\Institute of Mathematics,
University of Silesia, ul. Ban\-ko\-wa 14, 40-007 Katowice}
\email{plewik@math.us.edu.pl}
\begin{document}

\title{Skeletal maps and I-favorable  spaces} 
\subjclass[2000]{Primary: 54B10,  91A44; Secondary: 54C10, 91A05}
\keywords{Adequate pair, Inverse system; I-favorable space; Skeletal map}

\begin{abstract}
We show that the class of all compact Hausdorff and $I$-favorable  spaces is adequate for  the class of skeletal maps.  
\end{abstract}

\maketitle

 \section{Introduction} In \cite{s76}  E. V. Shchepin introduced so called adequate pairs. Suppose $\Xee$ is a class  
of compact Hausdorff spaces and $\Phi$ is a class  of continuous maps. The pair $(\Xee, \Phi)$ fulfills the condition 
of \textit{closure}, if the inverse limit of a continuous sequence $\{X_\alpha, p^\beta_\alpha , \Sigma \}$
belong to $\Xee$ and all  projections $\pi_\beta \in \Phi$, whenever 
all  spaces $X_\alpha\in \Xee$ of and all  bounding maps  $p^\beta_\alpha \in \Phi$. The pair $(\Xee, \Phi)$ fulfills the
 condition of \textit{decomposability}, whenever   every non-metrizable space
$X \in \Xee$ is the inverse limit of a continuous sequence $\{X_\alpha; p^\beta_\alpha; \alpha < \beta <\w(X)\}$, where 
 $\w(X_\alpha) <\w(X)$   and 
$X_\alpha \in \Xee$ and  $p^\beta_\alpha \in \Phi$. Finally, $(\Xee, \Phi)$ is an  \textit{adequate pair},  whenever it  fulfills 
  closure and decomposability.  
The aim of this note is to show that the class of compact Hausdorff   and $I$-favorable  spaces is adequate for  the class
 of skeletal maps.

  A directed set $\Sigma $ is said to be
$\sigma$-\textit{complete} if any countable chain of its elements has
least upper bound in $\Sigma$. 
 An inverse system $  \{ X_\sigma , \pi^\sigma_\varrho, \Sigma\}$ is said to be a $\sigma$-\textit{complete},
 whenever  $\Sigma $ is 
$\sigma$-complete and for every chain $\{\sigma_n: n \in \omega \} \subseteq \Sigma$, such that 
$\sigma = \sup \{\sigma_n: n \in \omega \} \in \Sigma,$ there  holds $$X_{\sigma }= \varprojlim \{ X_{\sigma_n}, \pi^{\sigma_{n+1}}_{\sigma_n}\}.$$ 
We consider inverse systems where bounding maps are surjections, only. Details about inverse systems 
one can find in \cite{eng}, pages 135 - 144.

A continuous surjection  is called \textit{skeletal},  whenever for  any non-empty open sets $U\subseteq X$ the
 closure of  $f[U]$ has non-empty interior. If $X$ is a compact space and  $Y$  Hausdorff, then a continuous 
surjection $f:X\to Y$ is skeletal if, and only if $\int f[U] \not=\emptyset,$  for every non-empty and open $U\subseteq X$.

It is well know - compare  a comment following the definition of compact open-generated spaces in \cite{s81} - 
that  each inverse  system  with open bounding maps has open limit projections. And conversely, if all limit 
projections of an inverse system are open,
then so are all bounding maps. 
The following fact is stated in \cite[Lemma 3]{bla}. Its proof is given in \cite[Proposition 8]{kp8}: 
\textit{If  $\{ X_\sigma, \pi^\sigma_\varrho,\Sigma\}$ is  an  inverse system such that  all bounding 
maps $\pi^\sigma_\varrho$ are skeletal and all projections  $\pi_\sigma$ are onto, then any projection 
$\pi_\sigma$ is skeletal}.

 \section{On $I$-favorable spaces}  
 Let  $X$ be a topologigal space equipped with a topology $\Tee$. The space $X$ is called I-\textit{favorable}, 
whenever there exists    a  function $$\sigma :\bigcup \{ \Tee^n: n\geq 0\} \to \Tee $$  such that
 for each sequence  $B_0, B_1, \ldots$ consisting of non-empty elements of $\Tee$ with  
$B_0 \subseteq \sigma(\emptyset)$ and   $B_{n+1} \subseteq \sigma((B_0, B_1, \ldots, B_n))$, for each $n\in \omega$,    
  the union $B_0 \cup B_1 \cup B_2  \cup  \ldots$  is dense in $X$. The function $\sigma$ is called  
a  \textit{winning strategy}. In fact, one can take a $\pi$-base (or a base) instead of a topology in the 
definition of a winning strategy.   The definition of $I$-favorable spaces  was introduced by P. Daniels, K. Kunen  and H.~Zhou \cite{dkz}.
The next the lemma one can  conclude from \cite[Theorem 4.1]{bjz}.  
\begin{lemma}\label{sk}   A  skeletal image of $I$-favorable space is a $I$-favorable space. \end{lemma}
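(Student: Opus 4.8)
\emph{Proof proposal.} The plan is to transfer a winning strategy from $X$ to $Y$ by running a ``shadow game'' on $X$. Fix a skeletal surjection $f\colon X\to Y$ together with a winning strategy $\sigma$ witnessing that $X$ is $I$-favorable. I will define a strategy $\tau$ on $Y$ and, simultaneously, for every position $(C_0,\dots,C_{n-1})$ reached while playing by $\tau$, an associated legal $\sigma$-play $(B_0,\dots,B_{n-1})$ in $X$ with $f[B_i]\subseteq C_i$ for all $i<n$. Put $\tau(\emptyset)=\int\overline{f[\sigma(\emptyset)]}$; this is a non-empty open subset of $Y$ exactly because $\sigma(\emptyset)$ is non-empty open and $f$ is skeletal. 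When the opponent answers with a non-empty open $C_0\subseteq\tau(\emptyset)$, then $C_0\cap f[\sigma(\emptyset)]\neq\emptyset$ (a non-empty open set contained in the closure of $f[\sigma(\emptyset)]$ must meet $f[\sigma(\emptyset)]$), so $B_0:=\sigma(\emptyset)\cap f^{-1}[C_0]$ is a non-empty open subset of $\sigma(\emptyset)$, i.e.\ a legal first $\sigma$-move, and $f[B_0]\subseteq C_0$. Inductively, given $(C_0,\dots,C_{n-1})$ and the shadow play $(B_0,\dots,B_{n-1})$, set $\tau(C_0,\dots,C_{n-1})=\int\overline{f[\sigma(B_0,\dots,B_{n-1})]}$, again non-empty and open by skeletality; for a legal answer $C_n\subseteq\tau(C_0,\dots,C_{n-1})$ put $B_n:=\sigma(B_0,\dots,B_{n-1})\cap f^{-1}[C_n]$, which by the same argument is a non-empty open subset of $\sigma(B_0,\dots,B_{n-1})$, hence a legal $\sigma$-move, with $f[B_n]\subseteq C_n$. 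On positions not reachable this way, define $\tau$ arbitrarily; the degenerate case where some $\sigma(B_0,\dots,B_{n-1})$ is empty is harmless, since then the opponent cannot answer and the play terminates, exactly mirroring the shadow game.

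Since each $B_i$ is determined by $C_0,\dots,C_i$ alone (the full intersection is taken, so no arbitrary choice intervenes), $\tau$ is a well-defined strategy on $Y$. It remains to verify it is winning. Let $C_0,C_1,\dots$ be any legal $\tau$-play; then $B_0,B_1,\dots$ is a legal $\sigma$-play, so $\bigcup_i B_i$ is dense in $X$. From $f[B_i]\subseteq C_i$ we get $B_i\subseteq f^{-1}[C_i]$, hence $f^{-1}\big[\bigcup_i C_i\big]\supseteq\bigcup_i B_i$ is dense in $X$. Finally, if $\bigcup_i C_i$ were not dense in $Y$ it would be disjoint from some non-empty open $V\subseteq Y$, and then $f^{-1}[V]$ would be a non-empty open subset of $X$ (surjectivity and continuity of $f$) disjoint from $f^{-1}\big[\bigcup_i C_i\big]$, contradicting the density just established. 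Therefore $\bigcup_i C_i$ is dense in $Y$, so $\tau$ is a winning strategy and $Y$ is $I$-favorable.

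The substance of the argument is concentrated in the definition $\tau(\,\cdot\,)=\int\overline{f[\,\cdot\,]}$ and the two elementary facts it rests on: skeletality of $f$ makes every $\tau$-move a non-empty open set and lets every opponent answer in $Y$ be pulled back to a non-empty legal $\sigma$-move in $X$, while surjectivity and continuity of $f$ carry density of the $X$-play down to density of the $Y$-play. I expect the only thing needing care to be the bookkeeping: keeping the shadow play deterministic so that $\tau$ is genuinely a function of the visible position, and checking that the inclusions $f[B_i]\subseteq C_i$ persist through the recursion. (One could equally run everything with $\pi$-bases instead of full topologies, as noted before the lemma, but this is not necessary.) \qed
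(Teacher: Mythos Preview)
Your proof is correct and follows essentially the same route as the paper: define the transferred strategy by $\tau(\cdot)=\int\overline{f[\sigma(\cdot)]}$, maintain a shadow $\sigma$-play $B_n=\sigma(B_0,\dots,B_{n-1})\cap f^{-1}[C_n]$, and use that $f$ maps the dense union $\bigcup B_n$ onto a dense subset of $\bigcup C_n$. Your write-up is in fact more careful than the paper's on two points---why each $B_n$ is non-empty and why $\tau$ is a genuine function of the visible position---but the underlying argument is identical.
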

\begin{proof}  Let $f:X\to Y$ be a  skeletal map.   Suppose  a  function 
$\sigma_X :\bigcup \{ \Tee^n_X: n\geq 0\} \to \Tee_X $ witnesses that $X$ is $I$-favorable. 
Put  $$\sigma_Y (\emptyset)= \int\cl f[\sigma_X (\emptyset)].$$ If $V_0 \subseteq \sigma_Y (\emptyset)$, 
then put $B_0=f^{-1}(V_0) \cap  \sigma_X (\emptyset).$  Suppose that non-empty sets 
$V_0 \subseteq \sigma_Y(\emptyset)$ and   $V_{n} \subseteq \sigma_Y((V_0, V_1, \ldots, V_{n-1}))$ are 
choosen and sets $B_0, B_1, \ldots , B_{n-1}$ are defined, also. 
Put $$B_n=  f^{-1}(V_n) \cap  \sigma_X (B_0, B_1, \ldots , B_{n-1})$$ and 
$$\sigma_Y((V_0, V_1, \ldots, V_{n}))= \int\cl f[\sigma_X (B_0, B_1, \ldots , B_{n}) ].$$ 
The function $\sigma_Y $ witnesses that $Y$ is $I$-favorable, since    $V_0 \cup V_1 \cup \ldots$   
contains a dense set $f[ B_0 \cup B_1 \cup \ldots]$. \end{proof}

\section{Decomposability} We shall prove that the class of  compact Hausdorff and $I$-favorable spaces and the class of skeletal maps fulfill decomposability. Recall notions and  facts which are stated  in \cite{kp8}.
If $\Pee$ is family of subsets of $X$, then $y\in [x]_{\Pee}$ denotes that, for each $V\in \Pee$, $x\in V$ if, and only if $y\in V$; and $X{/\Pee}$ is the family of all  classes $[x]_{\Pee}$; and $X{/\Pee}$ is equipped with the coarser topology containing all  sets $ \{ [x]_\Pee: x \in V \}$, where $V\in \Pee$. Suppose $X$ is a topological space and $\Pee$ consists of open subsets of $X$. If $\Pee$ is closed under finite intersection, then the map $q ( x) = [x]_{\Pee}$ is continuous, see \cite[Lemma 1]{kp8}. Let  $\Pee_{seq}$ be the family of all sets $W$ which  satisfy the  following condition:  There exist sequences  of sets $ \{ U_n: n \in \omega\} \subseteq \Pee$ and $ \{ V_n: n \in \omega\} \subseteq \Pee$ such that $U_k\subseteq (X\setminus V_k)  \subseteq U_{k+1}$, for any $k\in\omega$, and $\bigcup \{ U_n: n \in \omega\}=W$.  If a ring  $\Pee$  of open subsets of $X$ is  closed   under a winning strategy and  $\Pee\subseteq  \Pee_{seq}$, then $X/{\Pee}$  is a completely regular space and the map $q:X\to X/{\Pee}$ is  skeletal, see  \cite[Theorem 10]{kp8}. In fact, we shall improve and generalize the following:
If $X$ is a $I$-favorable compact  space, then  $ X = \varprojlim \{ X_\sigma, \pi^\sigma_\varrho,\Sigma\},$ where   $\{ X_\sigma, \pi^\sigma_\varrho,\Sigma\}$ is a  $\sigma$-complete inverse system, all spaces $X_\sigma$ are compact and metrizable, and all bonding maps $\pi^\sigma_\varrho$ are skeletal and onto, see   \cite[Theorem 12]{kp8}.

Let  $\{X_\alpha: \alpha \in \Sigma\} $ be a family of Hausdorff topological spaces, where $(\Sigma, < ) $ is an upward directed set.  Suppose that there are given continuous functions $p^\beta_\alpha: X_\beta \to X_\alpha$  such that
$p^\gamma_\alpha = p^\gamma_\beta \circ p^\beta_\alpha$ 
 whenever  $\alpha <\beta < \gamma$. Thus    $\See = \{X_\alpha; p^\beta_\alpha;\Sigma \}$  is the inverse system with  continuous bounding maps. 
\begin{lemma}\label{321}  Let $X$ be a  Hausdorff topological space and $\See = \{X_\alpha; p^\beta_\alpha;\Sigma \}$   an inverse system with  continuous bounding maps.
   If there exist maps $\pi_\beta: X \to X_\beta$ such that each $\pi_\beta = p_\beta^\alpha \circ \pi_\alpha$ is onto $X_\beta$ and for each two different points $x, y \in X$ there exists $\alpha\in \Sigma$ with $\pi_\alpha(x) \not=\pi_\alpha (y)$, then there exists a one-to-one continuous map $f:X \to \varprojlim \See$  onto a dense subspace of $\varprojlim \See$. 
      \end{lemma}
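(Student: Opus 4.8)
The plan is to define $f$ by the usual threading of the projections, namely $f(x) = (\pi_\alpha(x))_{\alpha \in \Sigma}$, and then verify the three required properties: $f$ lands in $\varprojlim \See$, $f$ is continuous, $f$ is injective, and $f[X]$ is dense. First I would check that $f(x)$ is a genuine thread: since $\pi_\alpha = p^\beta_\alpha \circ \pi_\beta$ for $\alpha < \beta$, the coordinates are compatible, so $f(x) \in \varprojlim \See \subseteq \prod_{\alpha} X_\alpha$. Continuity of $f$ is immediate because each coordinate map $\pi_\alpha$ is continuous and $\varprojlim\See$ carries the subspace topology inherited from the product; equivalently $\pr_\alpha \circ f = \pi_\alpha$ is continuous for every $\alpha$. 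Injectivity is exactly the separation hypothesis: if $x \neq y$ then some $\pi_\alpha(x) \neq \pi_\alpha(y)$, hence the $\alpha$-coordinates of $f(x)$ and $f(y)$ differ.

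The only substantive point is density of $f[X]$ in $\varprojlim\See$. Here I would use that the projections $\pi_\alpha$ are onto $X_\alpha$ together with the fact that sets of the form $\pr_\alpha^{-1}(U) \cap \varprojlim\See$, for $U$ open in $X_\alpha$, form a base for $\varprojlim\See$ (a basic open set in the product restricted to finitely many coordinates $\alpha_1,\dots,\alpha_n$ can be replaced, using directedness of $\Sigma$ and the compatibility $p^\beta_{\alpha_i}$, by a single such set at an upper bound $\beta$ of the $\alpha_i$; this is the standard reduction). Given a non-empty basic open set $\pr_\beta^{-1}(U) \cap \varprojlim\See$ with $U \subseteq X_\beta$ non-empty open, non-emptiness of the set means $U$ contains some point of $\pr_\beta[\varprojlim\See]$; in any case $\pi_\beta$ onto gives a point $x \in X$ with $\pi_\beta(x) \in U$ once we know $U$ meets the image, and more carefully: pick any point $t$ of that basic open set, so $\pr_\beta(t) \in U$, and then $\pi_\beta$ surjective yields $x$ with $\pi_\beta(x) = \pr_\beta(t) \in U$, whence $f(x) \in \pr_\beta^{-1}(U)$. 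Thus $f[X]$ meets every non-empty basic open subset of $\varprojlim\See$, so $f[X]$ is dense.

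I do not expect a serious obstacle; the main thing to get right is the base reduction for the inverse limit topology, i.e. that one may restrict attention to cylinders over a single index rather than finitely many, which uses directedness of $\Sigma$ and the cocycle identity $p^\gamma_\alpha = p^\gamma_\beta \circ p^\beta_\alpha$. Everything else — threading, continuity, injectivity — is formal. One should also note that $f$ need not be a homeomorphism onto its image (the hypotheses do not force $X$ to carry the initial topology from the $\pi_\alpha$), which is why the statement only claims a one-to-one continuous map onto a dense subspace rather than an embedding onto all of $\varprojlim\See$; this matches the phrasing of the lemma and requires no extra argument.
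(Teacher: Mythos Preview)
Your proposal is correct and follows exactly the paper's approach: define $f(x)=(\pi_\alpha(x))_{\alpha\in\Sigma}$ and verify the required properties. The paper's own proof is a one-line definition of $f$ with a reference to Engelking and \cite{kp8} for the routine verifications, so you have simply written out the details that the paper leaves to those sources.
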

\begin{proof} For any $x\in X$, put  $f(x)= \{\pi_\alpha (x)\}$. The function  $f$ is a  required one,  compare  \cite{eng} or \cite[Theorem 11]{kp8}. 
\end{proof}

Additionally, one concludes that $X$ has to be homeomorphic with  $ \varprojlim \See$, assuming that $X$ is compact in the above lemma. Now, we apply Lemma \ref{321} to an inverse sequence with a directed set $\Sigma$ which consists of infinite ordinal numbers less than the weight $\w(X)$ of a space $X$. 

\begin{theorem}\label{222} Any  a compact non-metrizable  and $I$-favorable space  $X $ is homeomorphic with the inverse limit of a continuous sequence $\{X_\alpha; p^\beta_\alpha; \omega \leq \alpha < \beta <\w(X) \}$, where each $X_\alpha$ is a compact Hausdorff and  $I$-favorable space,  with   $\w(X_\alpha )<\w(X)$,   and such that any  bounding map  $p^\beta_\alpha $ is skeletal.  
\end{theorem}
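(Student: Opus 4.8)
The plan is to build the desired continuous sequence by applying the machinery of quotient maps $q:X\to X/\Pee$ from \cite[Theorem 10]{kp8}, indexed by an increasing chain of rings $\Pee_\alpha$ of open sets, each of size less than $\w(X)$, that are closed under a fixed winning strategy $\sigma$ and satisfy $\Pee_\alpha\subseteq(\Pee_\alpha)_{seq}$. First I would fix a winning strategy $\sigma$ witnessing that $X$ is $I$-favorable (by the remark after the definition, we may assume $\sigma$ takes values in, and finite sequences are drawn from, a fixed base $\Bee$ of clopen-like or at least regular open sets of size $\w(X)$; in a compact Hausdorff space one can arrange a base of size $\w(X)$ closed enough to generate the needed rings). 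Enumerate a base as $\{W_\xi:\xi<\w(X)\}$. Then by a standard closing-off argument of length $\w(X)$, build for each infinite $\alpha<\w(X)$ a ring $\Pee_\alpha$ of open sets with $|\Pee_\alpha|\le|\alpha|+\omega<\w(X)$, such that $\{W_\xi:\xi<\alpha\}\subseteq\Pee_\alpha$, the $\Pee_\alpha$ are increasing and continuous at limits ($\Pee_\lambda=\bigcup_{\alpha<\lambda}\Pee_\alpha$), each $\Pee_\alpha$ is closed under finite intersections and unions, closed under the strategy $\sigma$ (i.e. $\sigma$ applied to any finite sequence from $\Pee_\alpha$ lands in $\Pee_\alpha$), and $\Pee_\alpha\subseteq(\Pee_\alpha)_{seq}$ — the last clause is forced by, whenever a set $W$ gets put into $\Pee_\alpha$, also throwing in witnessing sequences $\{U_n\},\{V_n\}$ using normality/complete regularity of the compact Hausdorff $X$ to separate nested closed and open sets. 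At each step only countably many new sets are generated per existing set and per finite tuple, so cardinalities stay below $\w(X)$, and a limit of length $\omega$ of such rings still satisfies $\Pee\subseteq\Pee_{seq}$ and closure under $\sigma$, which is exactly $\sigma$-completeness of the resulting system.

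Set $X_\alpha=X/\Pee_\alpha$ with the quotient map $\pi_\alpha=q_\alpha:X\to X_\alpha$. For $\alpha<\beta$, since $\Pee_\alpha\subseteq\Pee_\beta$ the relation $[x]_{\Pee_\beta}=[y]_{\Pee_\beta}$ refines $[x]_{\Pee_\alpha}=[y]_{\Pee_\alpha}$, so there is a well-defined bonding map $p^\beta_\alpha:X_\beta\to X_\alpha$ with $\pi_\alpha=p^\beta_\alpha\circ\pi_\beta$; it is continuous because the topology on $X_\alpha$ is generated by the $\{[x]_{\Pee_\alpha}:x\in V\}$, $V\in\Pee_\alpha$, which pull back to the corresponding basic sets of $X_\beta$. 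Each $X_\alpha$ is completely regular and $q_\alpha$ is skeletal by \cite[Theorem 10]{kp8}; it is a continuous image of the compact $X$, hence compact, hence compact Hausdorff. Its weight is at most $|\Pee_\alpha|<\w(X)$. By Lemma \ref{sk}, $X_\alpha$ — being a skeletal image of the $I$-favorable $X$ — is $I$-favorable. For the bonding maps: $p^\beta_\alpha$ is onto since $\pi_\alpha=p^\beta_\alpha\circ\pi_\beta$ with $\pi_\alpha$ onto, and it is skeletal — this follows because $\pi_\alpha=p^\beta_\alpha\circ\pi_\beta$ is skeletal while $\pi_\beta$ is skeletal and open-like enough, or more simply one checks directly from the description of basic open sets that the image under $p^\beta_\alpha$ of a basic open set of $X_\beta$ has nonempty interior; alternatively invoke the cited Lemma 3 of \cite{bla}/Proposition 8 of \cite{kp8} structure once the system is in place.

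Since the base $\{W_\xi\}$ is contained in $\bigcup_\alpha\Pee_\alpha$, the maps $\pi_\alpha$ separate points of the Hausdorff space $X$ (two distinct points are separated by some basic $W_\xi$, hence by $\Pee_\alpha$ for $\alpha>\xi$, hence by $\pi_\alpha$). So Lemma \ref{321} applies and gives a one-to-one continuous $f:X\to\varprojlim\See$ onto a dense subspace; as remarked after that lemma, compactness of $X$ upgrades $f$ to a homeomorphism. Finally the system is continuous (= $\sigma$-complete) because the rings were chosen continuous at limits: for a limit $\lambda$ with $\lambda=\sup\{\alpha_n\}$, $\Pee_\lambda=\bigcup_n\Pee_{\alpha_n}$ forces $X_\lambda=\varprojlim\{X_{\alpha_n},p^{\alpha_{n+1}}_{\alpha_n}\}$ — a point of the inverse limit is a coherent family of $\Pee_{\alpha_n}$-classes, which is exactly a $\Pee_\lambda$-class. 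This completes the construction. The main obstacle I expect is the bookkeeping in the closing-off: simultaneously maintaining closure under $\sigma$, closure under the ring operations, the condition $\Pee_\alpha\subseteq(\Pee_\alpha)_{seq}$ (which needs us to also absorb separating sequences produced via normality of $X$), and the cardinality bound $|\Pee_\alpha|<\w(X)$ — plus verifying that $\Pee_\alpha\subseteq(\Pee_\alpha)_{seq}$ genuinely survives the union at steps of cofinality $\omega$, which is what makes the resulting inverse system $\sigma$-complete rather than merely continuous.
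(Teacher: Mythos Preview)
Your proposal is correct and follows essentially the same route as the paper: build a continuous increasing chain of rings $\Pee_\alpha$ (each of size $<\w(X)$, containing an initial segment of a fixed base, closed under finite unions and intersections, under a winning strategy, and satisfying $\Pee_\alpha\subseteq(\Pee_\alpha)_{seq}$), set $X_\alpha=X/\Pee_\alpha$, apply \cite[Theorem~10]{kp8} for skeletality of the quotient maps, Lemma~\ref{sk} for $I$-favorability of the $X_\alpha$, and Lemma~\ref{321} together with compactness for the homeomorphism $X\cong\varprojlim X_\alpha$. The one difference is cosmetic: where you invoke a generic closing-off to secure $\Pee_\alpha\subseteq(\Pee_\alpha)_{seq}$, the paper works exclusively with cozero sets and, for each $W=f_W^{-1}((0,1])$, throws in the explicit auxiliary functions $\sigma_{2n}(W)=f_W^{-1}((1/n,1])$ and $\sigma_{2n+1}(W)=f_W^{-1}([0,1/n))$, so that closing under these $\sigma_n$ (alongside the winning strategy) automatically produces the witnessing sequences and dissolves the bookkeeping worry you flagged at the end.
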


\begin{proof} For each cozero set $W\subseteq X$ fix a continuous function $f_W: X \to [0,1]$ such that $W= f_W^{-1}((0,1])$. Put  $\sigma_{2n} (W) = f_W^{-1}((\frac 1 n,1])$ and $\sigma_{2n+1} (W) = f_W^{-1}([0, \frac 1 n))$. Assume that $\sigma =\sigma_0$.
Fix a base $ \{V_\alpha: \alpha < \w(X)\}$   consisting of cozero sets of $X$.   If  $\omega \leq \beta < \w(X)$, then  $\Pee_\beta \supseteq  \{V_\alpha: \alpha < \beta\}$ is the least family  consisting  of cozero sets and  closed under finite unions and  under finite intersections  and  closed under  all functions $\sigma_n$. Thus $|\Pee_\beta|=|\beta|$ and  $\Pee_\gamma \subseteq \Pee_\beta$, whenever $\omega \leq \gamma \leq \beta$. Also, we get  $\Pee_\beta = \bigcup \{\Pee_\gamma: \gamma < \beta\}$ for a limit ordinal $\beta$.  Put $X_\beta = X{/\Pee_{\beta}}$ and $q_\beta (x) = [x]_{\Pee_\beta}$, thus    maps  $q^{\Pee_\beta}_{\Pee_\gamma} :  X_{\beta} \to X_\gamma$ are  skeletal. 

By Lemma \ref{321}, the inverse limit $$  \varprojlim \{ X_{\beta}; q^{\Pee_\beta}_{\Pee_\gamma}; \omega \leq \gamma < \beta <\w(X)\}$$ is homeomorphic to $X$ and each inverse limit   $$\varprojlim \{ X_{\beta}; q^{\Pee_\beta}_{\Pee_\gamma}; \omega \leq \gamma < \beta <\alpha\}$$ is  homeomorphic to  $X_\alpha$.
All spaces $X_\alpha$ are  skeletal images of  $X$, hence    they are $I$-favorable by Lemma \ref{sk}. We get  $\w(X_\beta) < \w(X)$, since  the family $\{ \{ [x]_{\Pee_\beta}: x \in V \}: V\in \Pee_\beta\}$ is a base for $X_\beta$, compare \cite[Lemma 1]{kp8}.
\end{proof}

\section{Closure}   
  We shall prove that the class of  compact Hausdorff and $I$-favorable spaces and the class of skeletal maps 
fulfill closure.  In fact, we shall  
 improve   \cite[Theorem 13]{kp8},  not assuming that spaces $X_\sigma$ have countable $\pi$-bases.

\begin{theorem}\label{33}
If $\See= \{ X_\alpha, \pi^\alpha_\varrho,\Sigma \}$ is a  $\sigma$-complete inverse system which consists of  
compact Hausdorff and $I$-favorable spaces  and  skeletal bounding maps $\pi^\alpha_\varrho$,  then the 
inverse limit $\varprojlim \See$ is a compact Hausdorff  and $I$-favorable space. 
\end{theorem}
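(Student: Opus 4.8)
The plan is to verify the two nontrivial properties separately: that $\varprojlim\See$ is compact Hausdorff (which is almost immediate from the classical theory of inverse limits), and that it is $I$-favorable (the real work). For the first, recall that the inverse limit of compact Hausdorff spaces along continuous bounding maps is a closed subspace of the product $\prod_\alpha X_\alpha$, hence compact Hausdorff; since we assume all bounding maps are surjective and the spaces are compact, the limit is nonempty and all limit projections $\pi_\alpha$ are onto. Moreover, by the cited result of \cite{bla}/\cite{kp8}, since every bounding map $\pi^\alpha_\varrho$ is skeletal and every projection $\pi_\alpha$ is onto, every projection $\pi_\alpha\colon\varprojlim\See\to X_\alpha$ is skeletal. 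This last fact is what will let me transport winning strategies upward.

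For $I$-favorability, the idea is to build a winning strategy on $X=\varprojlim\See$ by running, on each coordinate $\alpha$, the given winning strategy $\sigma_\alpha$ on $X_\alpha$, but along a countable chain in $\Sigma$ that is chosen adaptively as Player II's moves come in. Fix winning strategies $\sigma_\alpha$ for each $X_\alpha$. A natural basis for open sets of $X$ consists of sets of the form $\pi_\alpha^{-1}(U)$ with $U$ open in $X_\alpha$; I will let Player I always answer inside such a ``cylinder'' set. Play proceeds as follows: when it is Player I's turn, having seen the finite sequence $B_0,\dots,B_{n-1}$ of Player II's moves (each a nonempty open subset of $X$, which after shrinking we may take to be of the form $\pi_{\gamma_k}^{-1}(C_k)$), let $\alpha_n=\sup\{\gamma_0,\dots,\gamma_{n-1}\}\vee\alpha_{n-1}$, a legitimate element of $\Sigma$ since $\Sigma$ is directed (and the eventual supremum of the whole run lies in $\Sigma$ by $\sigma$-completeness). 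Push each $B_k$ down to $X_{\alpha_n}$ via $\pi_{\alpha_n}$; since $\pi_{\alpha_n}$ is skeletal, $\int\cl\pi_{\alpha_n}[B_k]$ is a nonempty open set in $X_{\alpha_n}$, and I feed these images (in order) to the strategy $\sigma_{\alpha_n}$ on $X_{\alpha_n}$ to get an open set $O_n\subseteq X_{\alpha_n}$, then declare Player I's response on $X$ to be $\pi_{\alpha_n}^{-1}(O_n)$ — restricted so that it stays legal, i.e. inside the previously prescribed cylinder. Exactly as in the proof of Lemma \ref{sk}, each $B_k$ contains (a dense part of) $\pi_{\alpha_n}^{-1}$ of the corresponding $\sigma_{\alpha_n}$-move, so the sequence of images $\pi_\delta[B_k]$, where $\delta=\sup_n\alpha_n\in\Sigma$, is a legal run of the game on $X_\delta$ against $\sigma_\delta$ (using $\sigma$-completeness so that $X_\delta=\varprojlim\{X_{\alpha_n},\pi^{\alpha_{n+1}}_{\alpha_n}\}$ and the strategies glue), hence $\bigcup_k\pi_\delta[B_k]$ is dense in $X_\delta$; pulling back, $\pi_\delta^{-1}\!\big[\bigcup_k\pi_\delta[B_k]\big]$ is dense in $X$, and it is contained in $\bigcup_k B_k$ up to the usual density correction, so $\bigcup_k B_k$ is dense in $X$.

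The main obstacle is bookkeeping the coordinate index adaptively: I must ensure that the index $\alpha_n$ I use at stage $n$ is (i) an element of $\Sigma$, (ii) $\geq$ all indices used so far and $\geq$ all indices $\gamma_k$ appearing in Player II's moves, and (iii) such that the limit $\delta=\sup_n\alpha_n$ again lies in $\Sigma$ with $X_\delta$ the inverse limit of the $X_{\alpha_n}$'s — this is precisely where $\sigma$-completeness of the system is used, and it is the hypothesis that makes everything go through. A secondary technical point is that the game in the definition of $I$-favorable is stated with respect to a fixed $\pi$-base; I will work with the $\pi$-base of cylinder sets $\{\pi_\alpha^{-1}(U):U\ \text{open in}\ X_\alpha\}$ (legitimate by the remark after the definition of winning strategy), which makes ``pushing down and pulling back'' literal rather than approximate. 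Finally, I should check the degenerate case where $\varprojlim\See$ is metrizable, but then it is trivially $I$-favorable, so no separate argument is needed; and the empty-sequence move is handled by taking $\alpha_0$ to be any fixed index and $\sigma_X(\emptyset)=\pi_{\alpha_0}^{-1}(\sigma_{\alpha_0}(\emptyset))$.
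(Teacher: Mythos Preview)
Your overall shape is right --- work with the cylinder base, push Player~II's moves down to coordinate spaces, respond using the given winning strategies there, and use $\sigma$-completeness to obtain a supremum index $\delta\in\Sigma$ --- but there is a real gap at the step where you write ``the strategies glue''. At stage $n$ you feed the projections $\int\cl\pi_{\alpha_n}[B_0],\dots,\int\cl\pi_{\alpha_n}[B_{n-1}]$ into $\sigma_{\alpha_n}$, while at stage $n+1$ you switch to an entirely different strategy $\sigma_{\alpha_{n+1}}$ on a different space. For the images $\pi_\delta[B_k]$ to form a \emph{legal} run against some fixed winning strategy on $X_\delta$ you would need, for every $k$, an inclusion of the form $\int\cl\pi_\delta[B_{k}]\subseteq\sigma_\delta(\int\cl\pi_\delta[B_0],\dots,\int\cl\pi_\delta[B_{k-1}])$; but all you have arranged is $B_k\subseteq\pi_{\alpha_k}^{-1}(O_k)$ with $O_k$ produced by $\sigma_{\alpha_k}$, and the strategies $\sigma_{\alpha_0},\sigma_{\alpha_1},\dots$ were fixed independently with no compatibility between them. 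The $\sigma$-completeness hypothesis only yields $X_\delta=\varprojlim X_{\alpha_n}$; it says nothing about coherence of the \emph{strategies}. Consequently the density of $\bigcup_k\pi_\delta[B_k]$ in $X_\delta$ is simply unproved. (Invoking Lemma~\ref{sk} does not help: that lemma pushes a strategy \emph{forward} along a skeletal map, whereas here the projections go from $X$ down to the $X_\alpha$.)

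The paper's proof repairs exactly this point by \emph{not} trying to assemble a single run on $X_\delta$. It partitions $\omega$ into infinite pieces $A_0,A_1,\dots$ with $n\in A_k\Rightarrow k\le n$, and along the $A_k$-indexed subsequence of rounds it plays the \emph{fixed} strategy $\sigma_{\tau_k}$ on the \emph{fixed} space $X_{\tau_k}$ (the index $\tau_k$ being read off Player~II's $k$-th move, hence known by the time thread $A_k$ starts). Each $A_k$-thread is then a genuine legal run against $\sigma_{\tau_k}$, so $\bigcup_{j\in A_k}\int\pi_{\tau_k}[B_j]$ is dense in $X_{\tau_k}$. Since $X_\tau=\varprojlim X_{\tau_n}$ has a base consisting of sets $(\pi^\tau_{\tau_k})^{-1}(W)$, every basic open set in $X_\tau$ meets some $\pi_\tau[B_j]$, and density of $\bigcup_k B_k$ follows via the skeletal projection $\pi_\tau$. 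The idea missing from your plan is precisely this interleaving bookkeeping: rather than changing both the target space and the strategy at every round, reserve infinitely many rounds for each fixed coordinate strategy so that a complete winning run is actually produced on each $X_{\tau_k}$. (A minor aside: in the open--open game Player~I's moves are unconstrained, so the clause ``restricted so that it stays legal, i.e.\ inside the previously prescribed cylinder'' is unnecessary.)
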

\begin{proof} Let $\Bee$ be a base for $\varprojlim \See$ which consists of all sets 
 $\pi^{-1}_{\tau}(V)$, where $\tau\in\Sigma$ and each $V$ is an open subset of $X_{\tau}$. For each $\alpha \in \Sigma$, let $\sigma_\alpha$  be a winning strategy in the open-open game played on  $X_\alpha$. Fix $\tau_0 \in \Sigma$ and infinite and pairwise disjoint sets $A_n$ such that $\bigcup\{A_n: n\in\omega\}=\omega$ and  $n\in A_k$ implies $k \leq n$. Put  $ \sigma_\omega (\emptyset) = \pi^{-1}_{\tau_0} (\sigma_{\tau_0} (\emptyset))\subseteq \varprojlim \See$.
Suppose that $n \in A_k$ and all $\sigma_\omega ((B_0,B_1, \ldots , B_{n-1}))$ has been already defined such that   $\pi^{-1}_{\tau_m}(V_m)=B_m\subseteq \sigma_\omega ((B_0,B_1, \ldots , B_{m-1}))$ for  $0\leq m
\leq n$.  Thus indexes $\tau_0 < \tau_1 < \ldots < \tau_{n}$ are fixed.  
 If $n$ is the least element of $A_k$,  then $\tau_k\leq\tau_n$.  Put $$\sigma_\omega ((B_0,B_1, \ldots , B_n))=\pi_{\tau_k}^{-1}(\sigma_{\tau_k}(\emptyset)).$$ If  $\{i_0,i_1,\ldots , i_j\}=A_k\cap \{0,1,\ldots,n\}$ and $\tau_{i_0}<\tau_{i_1}<\ldots<\tau_{i_j}\leq\tau_n$, then let $$D_{i_0}=\int\pi_{\tau_k}(B_{i_0}), D_{i_1}=\int\pi_{\tau_k}(B_{i_1}),\ldots ,D_{i_j}=\int\pi_{\tau_k}(B_{i_j})\subseteq X_{\tau_k}$$ and   $$\sigma_\omega ((B_0,B_1, \ldots , B_n))=\pi_{\tau_k}^{-1}(\sigma_{\tau_k}((D_{i_0}, D_{i_1},\ldots ,D_{i_j})))\subseteq \varprojlim \See.$$ 
For other cases, put $\sigma_\omega((B_0,B_1, \ldots , B_{n}))\in \Bee$ arbitrarily. The strategy $ \sigma_\omega: \bigcup \{ \Bee^n: n\geq 0\} \to \Bee $ is just defined.  

Verify that $ \sigma_\omega$  is a winning strategy. Let   $\tau_0 < \tau_1 < \ldots $ and $B_0,
 B_1, \ldots$ be  sequences   such that  $\pi^{-1}_{\tau_0}(V_0)=B_0 \subseteq
 \sigma_\omega(\emptyset)$ and   $$\pi^{-1}_{\tau_{n+1}}(V_{n+1})=B_{n+1} \subseteq
 \sigma_\omega((B_0, B_1, \ldots, B_n)),$$  where  all $\tau_k\in\Sigma$
and each $B_k\in\Bee.$ If $\tau \in \Sigma$ is the least upper bound of $\{\tau_k: k\in
 \omega\}$, then always $$\pi^{-1}_{\tau}(\pi_{\tau}(B_k))=\pi^{-1}_{\tau}(\pi_{\tau
}(\pi^{-1}_{\tau_{k}}(V_{k})))=\pi^{-1}_{\tau}((\pi^{\tau}_{\tau_{k}})^{-1}(V_{k})))=
B_k.$$  Take an arbitrary base set  $(\pi^{\tau}_{\tau_{k}})^{-1}(W)\subseteq X_{\tau}$,
 where $W$ is an open subset in $X_{\tau_k}$.  Such sets consist of a base for $X_{\tau}$, since the inverse system is $\sigma$-complete and \cite[2.5.5. Proposition]{eng}. If $\sigma_{\tau_k}$ is a winning 
strategy on $X_{\tau_k}$,  then there exists $j\in A_k$ such that  $W\cap\int\pi_{\tau_k}(B_j)
\not =\emptyset$. Hence $$(\pi^\tau_{\tau_k})^{-1}(W)\cap \pi_\tau(B_j)\not =\emptyset,$$
Indeed, suppose that $(\pi^\tau_{\tau_k})^{-1}(W)\cap \pi_\tau(B_j)=\emptyset.$ Then $$\emptyset=\pi^\tau_{\tau_k}[(\pi^\tau_{\tau_k})^{-1}(W)\cap \pi_\tau(B_j)]=W\cap\pi^\tau_{\tau_k}[\pi_{\tau}(B_j)]\supseteq W\cap\int\pi_{\tau_k}(B_j),$$ a contradiction.
 Thus,  the union $\pi_\tau(B_0) \cup \pi_\tau(B_1) \cup \pi_\tau(B_2)  \cup  \ldots$  is dense in $X_\tau$. But $\pi_\tau$ is a skeletal map, hence $$\pi_\tau^{-1}(\pi_\tau(B_0) \cup \pi_\tau(B_1) \cup \pi_\tau(B_2)\cup  \ldots)=B_0 \cup B_1 \cup B_2  \cup  \ldots$$  has to be dense in $\varprojlim \See$.  
\end{proof}
  The class of compact Hausdorff   and $I$-favorable  spaces and the class of skeletal maps fulfills closure, since every continuous inverse sequence is a  $\sigma$-complete inverse system, too.   An equivalent version of Theorem \ref{33} for continuous inverse sequence, (using  Boolean algebras notions)  one can find in \cite[p. 197]{bjz}.

\end{document}